 \newtheorem{thm}{Theorem}[section]
 \newtheorem{lem}[thm]{Lemma}
 \newtheorem{prop}[thm]{Proposition}
 \theoremstyle{definition}
 \newtheorem{defn}[thm]{Definition}
 \theoremstyle{remark}
 \newtheorem{rem}[thm]{Remark}
 \numberwithin{equation}{section}
\begin{document}

%
%
%
%
%
%
%
%
%

\author[Youssef Aserrar and Elhoucien Elqorachi]{Youssef Aserrar and Elhoucien Elqorachi}

\address{%
	Ibn Zohr University, Faculty of sciences, 
Department of mathematics,\\
Agadir,
Morocco}

\email{youssefaserrar05@gmail.com, elqorachi@hotmail.com }
\subjclass{39B52, 39B32}

\keywords{Semigroups, Prime ideal, Involutive automorphism, Multiplicative function, d'Alembert's equation.}

\date{January 1, 2004}
\title[A d'Alembert type functional equation on  semigroups]{A d'Alembert type functional equation on semigroups}
 
\begin{abstract}
We treat two related trigonometric functional equations on
semigroups. First we solve the $\mu$-sine subtraction law
\[\mu(y) k(x \sigma(y))=k(x) l(y)-k(y) l(x), \quad x, y \in S,\]
for $k, l : S\rightarrow \mathbb{C}$, where $S$ is a semigroup  and 
 $\sigma$ an involutive automorphism, $\mu :S\rightarrow \mathbb{C}$ is a multiplicative function  such that
$\mu (x\sigma (x))=1$ for all $x\in S$, then we determine the complex-valued solutions of the
following functional equation 
\[f(xy) - \mu (y)f(\sigma (y)x) = g(x)h(y),\quad x,y\in S,\] 
on a larger class of semigroups.
\end{abstract}

\maketitle

\section{Introduction}
Ebanks and Stetkaer \cite{ES} solved the functional equation
\begin{equation}
f(xy) -f(\sigma (y)x) = g(x)h(y),\quad x,y\in M,
\label{EB1}
\end{equation} 
for $f,g,h: M \rightarrow \mathbb{C}$, where $M$ is a group or a monoid generated by its squares and $\sigma :M\rightarrow M$ is an involutive automorphism. That is $\sigma (xy)=\sigma (x)\sigma (y)$ and $\sigma(\sigma(x))=x$ for all $x,y\in M$.\newline
In \cite{BE},  Bouikhalene and Elqorachi determined the complex-valued solutions of the functional equation
\begin{equation}
f(x y)-\mu(y) f(\sigma(y) x)=g(x) h(y), \quad x, y \in M,
\label{E1}
\end{equation}
on groups and monoid generated by its squares, where $\mu :M\rightarrow \mathbb{C}$ is a multiplicative function such that $\mu (x\sigma (x))=1$ for all $x\in M$.
Moreover, the solutions of \eqref{E1} on a semigroup generated by its squares are also known (See Ajebbar and Elqorachi \cite{Ajb}). Recently,  Ebanks \cite{EB3} solved \eqref{EB1} on a larger class of monoids that contains the class of monoids generated by their squares, and regular monoids.\\
 There are some results about solutions of Equation \eqref{EB1} on abelian groups in the literature. The case of $G=\mathbb{R}$ is treated by Kannappan \cite[Equation (vs) in section 3.4.9]{K}. Stetkaer \cite[Corollary 3.5]{ST} derives the solution formulas for the functional equation \eqref{EB1} on abelian groups. The method used in \cite{Ajb,BE,EB3,ES} proceeds from the starting point of the trigonometric functional equation
\begin{equation}
\mu(y) k(x \sigma(y))=k(x) l(y)-k(y) l(x), \quad x, y \in S.
\label{E2}
\end{equation}
The solutions of \eqref{E2} (with $\mu=1$) on a general monoid was given recently by Ebanks \cite[Theorem 4.2]{ES1} in terms of multiplicative and additive functions. In the present paper, we treat the functional equation \eqref{E1} on a general class of semigroups.\\
The contributions of the present work to the knowledge about solutions of \eqref{E1} are the following :
\begin{enumerate}
\item[(1)] The setting has $S$ to be a semigroup, not necessarily generated by its squares, or a monoid.
\item[(2)] We solve Equation \eqref{E2} on semigroups.
\item[(3)] We relate the solutions of \eqref{E1} to the functional equation \eqref{E2} on semigroups, and find explicit formulas for the solutions on a larger class of semigroups.
\end{enumerate}

The outline of the paper is as follows : In the next section we give notations ans terminology. In the third section we solve the functional equation \eqref{E2} on semigroups, and we obtain the explicit formulas for the solutions of Equation \eqref{E1} on a larger class of semigroups $S$.

\section{Notations and Terminology}
Throughout this paper $S$ denotes a semigroup.\\
A function $A: S\rightarrow \mathbb{C}$ is additive if $A(xy) = A(x) + A(y)$ for all $x, y\in S$.\\
A function $\chi: S\rightarrow \mathbb{C}$ is multiplicative if $\chi(xy) = \chi(x)\chi(y)$ for all $x, y \in S$.\\
A function $f: S\rightarrow \mathbb{C}$ is central if $f(xy) = f(yx)$ for all $x, y\in S$, and $f$ is abelian if $f$ is central and $f(xyz)=f(xzy)$ for all $x,y,z\in S.$\\
If $S$ is a semigroup, we define the nullspace
$$
\mathcal{N}_{\mu}(\sigma, S):=\left\lbrace \theta: S \rightarrow \mathbb{C} \mid \theta(x y)-\mu(y) \theta(\sigma(y) x)=0,\quad x, y \in S\right\rbrace .
$$
For any subset $T\subseteq S$ we define $T^2:=\lbrace xy\  \vert \ x, y\in T\rbrace$.
If $\chi: S \rightarrow \mathbb{C}$ is a multiplicative function and $\chi \neq 0$, we define the sets $$I_{\chi}:=\{x \in S \mid \chi(x)=0\}$$ 
$$P_\chi : =\{p\in I_{\chi}\backslash I_{\chi}^2\  \vert up, pv, upv\in I_{\chi}\backslash I_{\chi}^2\  \text{for all}\  u,v\in S\backslash I_{\chi}\}.$$
 Notice that $I_{\chi}$ is  a subsemigroup of $S$ and a prime ideal of $S$.\par  
 For any function $f: S \rightarrow \mathbb{C}$ we define the function $f^{*}(x)=\mu(x) f(\sigma(x))$,\\ $x \in S$, we call $f^{e}:=\frac{f+f^{*}}{2}$ the even part of $f$ and $f^{\circ}:=\frac{f-f^{*}}{2}$ its odd part. The function $f$ is said to be even if $f=f^{*}$, and $f$ is said to be odd if $f=-f^{*}$. If $g, h: S \rightarrow \mathbb{C}$ are two functions we define the function $$(g \otimes h)(x, y):=g(x) h(y),\ x, y \in S.$$
  For a topological semigroup $S$ let $C(S)$ denote the algebra of continuous functions from $S$ into $\mathbb{C}$.
\section{Main result}
The following Lemma will be used later.
\begin{lem}
Let $k, l : S\rightarrow\mathbb{C}$ be a solution of the functional equation \eqref{E2} with $k \neq 0$. Then 
\begin{enumerate}
\item[(1)] $k(xy)=-k^*(yx)$ for all $x,y\in S$.
\item[(2)] If $k=k^*$, then $k(xyz)=0$ for all $x,y,z\in S$.
\end{enumerate}
\label{lem}
\end{lem}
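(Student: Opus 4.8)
The plan is to work directly from the defining equation \eqref{E2}, namely $\mu(y)k(x\sigma(y)) = k(x)l(y) - k(y)l(x)$, and exploit its antisymmetric structure on the right-hand side. For part (1), I would first establish the relation between $k$ and $k^*$ by applying the involution $\sigma$ and the multiplicative function $\mu$ to suitable substitutions. The natural first move is to replace $y$ by $\sigma(y)$ in \eqref{E2}, using $\sigma(\sigma(y)) = y$ and the multiplicativity of $\mu$ together with the normalization $\mu(y\sigma(y)) = 1$, to obtain a companion equation relating $k(xy)$ to the values of $k$ and $l$. Comparing this companion equation with the original should produce an identity expressing $k(xy)$ in terms of $k^*(yx)$. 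The key computational point is to track how $\mu$ distributes: since $\mu$ is multiplicative and $\mu(y\sigma(y))=1$, we have $\mu(\sigma(y)) = \mu(y)^{-1}$ on elements where $\mu$ does not vanish, and more importantly $\mu(x)\mu(\sigma(x)) = \mu(x\sigma(x)) = 1$, which is what forces the sign and the $k^*$ to appear cleanly.

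For part (2), assuming $k = k^*$, the goal is to show $k$ vanishes on all triple products. Here the idea is to feed the hypothesis $k=k^*$ back into the identity from part (1). Since part (1) gives $k(xy) = -k^*(yx)$, the assumption $k=k^*$ turns this into $k(xy) = -k(yx)$, so $k$ is "anti-central" in a precise sense. I would then compute $k$ on a triple product $xyz$ two different ways by associating the product differently: on one hand $k((xy)z) = -k(z(xy))$, and on the other hand $k(x(yz)) = -k((yz)x)$, and combine these with further applications of the anti-centrality relation to cycle the factors. Pushing a factor all the way around the product should return $k(xyz)$ with an accumulated sign, and a parity mismatch will force $k(xyz) = 0$. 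The underlying reason is that an odd function satisfying an anti-commutativity-type relation on a semigroup cannot be nonzero on products of length three.

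The main obstacle I anticipate is in part (1): carefully handling the bookkeeping for $\mu$ and $\sigma$ so that the substitution $y \mapsto \sigma(y)$ yields exactly $k^*(yx) = \mu(yx)k(\sigma(yx)) = \mu(x)\mu(y)k(\sigma(x)\sigma(y))$ rather than a shifted or mis-signed version. Because $S$ is only a semigroup with no identity and no cancellation, I cannot simply invert $\mu$ pointwise, so every manipulation must stay within the confines of the identity $\mu(t\sigma(t)) = 1$ and plain multiplicativity. Once part (1) is correctly set up, part (2) is a short algebraic consequence, so the real care goes into the first relation; I would verify it by substituting $x\sigma(y)$-type arguments and checking consistency against the original functional equation before concluding.
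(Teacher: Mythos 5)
Your proposal is correct and follows essentially the same route as the paper: part (1) combines the substitution $y\mapsto\sigma(y)$ with the antisymmetry of the right-hand side of \eqref{E2} and the normalization $\mu(y\sigma(y))=1$, and part (2) is the same cyclic-shift parity argument, three applications of $k(uv)=-k(vu)$ returning $k(xyz)$ with a minus sign. The only slip is notational: since $\sigma$ is an automorphism, $k^*(yx)=\mu(y)\mu(x)k(\sigma(y)\sigma(x))$ rather than $\mu(x)\mu(y)k(\sigma(x)\sigma(y))$ as written at the end of your proposal, which does not affect the argument.
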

\begin{proof}
(1) Let $x,y \in S$ be arbitrary. By interchanging $x$ and $y$ in \eqref{E2} we get that $\mu (y) k\left( x\sigma (y)\right) =-\mu (x) k\left( y\sigma (x)\right)$, and if we apply this identity to the pair $\left(x, \sigma (y) \right) $, we get $\mu \left( \sigma (y)\right)  k\left( xy\right)=-\mu (x) k\left( \sigma (yx)\right)$.\\
Multiplying this by $\mu (y)$ and by using that $\mu : S\rightarrow \mathbb{C}$ is a multiplicative function such that $\mu \left(y\sigma (y) \right) =1$ for all $y\in S$, we get $k(xy)=-k^*(yx)$ for all $x,y \in S$.\\
(2) By using (1), we get 
$$k(xyz)=-k^*(zxy)=k(yzx)=-k^*(xyz),\quad \text{for all}\quad x,y,z\in S.$$
So if $k=k^*$, we get that $k(xyz)=-k(xyz)$ for all $x,y,z\in S$. This implies that $k(xyz)=0$. This completes the proof of Lemma \ref{lem}.
\end{proof}
In the following proposition we extend the results obtained by Ajebbar and Elqorachi \cite[Proposition 3.3]{Ajb} on  semigroups generated by their squares and by Ebanks \cite[Theorem 4.2]{ES1} on monoids to general semigroups.
\begin{prop}
 The solutions of the functional equation \eqref{E2} with $k \neq 0$ are the following pairs :
 \begin{enumerate}
 \item[(1)] $k$ is any non-zero function such that $k=0$ on $S^2$ and $l=ck$, where $c \in \mathbb{C}$ is a constant.
\item[(2)] $$k=c_1\dfrac{\chi -\chi ^*}{2}\quad\text{and}\quad l=\dfrac{\chi +\chi ^*}{2}+c_2\dfrac{\chi -\chi ^*}{2},$$
where $\chi  : S\rightarrow \mathbb{C}$ is a multiplicative function such that $\chi ^*\neq \chi$ and $c_1 \in \mathbb{C}\backslash  \lbrace0\rbrace,c_2\in \mathbb{C}$ are constants.
\item[(3)] $$k=\left\{ \begin{matrix}
   \chi A & on & S\backslash {{I}_{\chi }}  \\
   0 & on & {{I}_{\chi }}\backslash {{P}_{\chi }}  \\
   \rho  & on & {{P}_{\chi }}  \\
\end{matrix} \right.\quad\text{and}\quad l=\left\{ \begin{matrix}
   \chi (1+cA) & on & S\backslash {{I}_{\chi }}  \\
   0 & on & {{I}_{\chi }}\backslash {{P}_{\chi }}  \\
   c\rho  & on & {{P}_{\chi }}  \\
\end{matrix} \ ,\right.
$$ 
\end{enumerate}
where $c \in \mathbb{C}$ is a constant, $\chi: S \rightarrow \mathbb{C}$ is a non-zero multiplicative function and $A: S \backslash I_{\chi} \rightarrow \mathbb{C}$ is a non-zero additive function such that $\chi^{*}=\chi$, $A \circ \sigma=-A$, and $\rho: P_{\chi} \rightarrow \mathbb{C}$ is the restriction of $k$ to $P_\chi$ such that $\rho^*=-\rho$. In addition we have the following conditions: \\
(I): If $x\in\left\lbrace up,pv,upv \right\rbrace $ for $p\in P_{\chi}$ and $u,v\in S\backslash I_{\chi}$, then $x\in P_{\chi}$ and we have respectively $\rho(x)=\rho(p)\chi (u)$, $\rho(x)=\rho(p)\chi (v)$, or $\rho(x)=\rho(p)\chi (uv)$.\\
(II): $k(xy)=k(yx)=0$ for all $x\in S\backslash I_{\chi}$ and $y\in I_{\chi}\backslash P_{\chi}$.\par
Conversely, if $(k,l)$ are given by the formulas in (1), (2) or (3) with conditions (I) and (II) holding and if $k$ is abelian in case (3), then $(k,l)$ satisfies \eqref{E2}.\par
 Furthermore, if $S$ is a topological semigroup and $k\in C(S)$, then \\$\chi,\chi^*\in C(S)$, $A\in C(S\backslash I_{\chi})$, and $\rho \in C(P_{\chi})$.
\label{P1}
\end{prop}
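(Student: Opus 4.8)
The goal is to classify all solutions $(k,l)$ of the $\mu$-sine subtraction law \eqref{E2} on a general semigroup $S$ with $k \neq 0$. I want to think about the structure first. The key preliminary fact is Lemma \ref{lem}: any solution has $k(xy) = -k^*(yx)$, and if $k$ is even ($k=k^*$) then $k$ vanishes on triple products $S^3$. I should also expect, by analogy with the classical sine subtraction law, that the natural companion object is $l$, and that $l$ should turn out to be (essentially) an even part together with a multiple of $k$.

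**Setting up the proof plan.** First I would dispose of the degenerate case. Suppose $k = 0$ on $S^2$. Then setting the functional equation on products shows the right-hand side $k(x)l(y) - k(y)l(x)$ is constrained, and a short computation forces $l = ck$ for a constant $c$; this gives case (1). So from now on I assume $k \not\equiv 0$ on $S^2$. The plan is to show that in this case $l^e := (l+l^*)/2$ is multiplicative. The standard device is to symmetrize \eqref{E2}: replacing $y$ by $\sigma(y)$, using $\mu(y\sigma(y))=1$ and multiplicativity of $\mu$, and combining with the original equation, I expect to derive that $k$ satisfies a sine-type addition relation and that $l^e$ is forced to be a nonzero multiplicative function $\chi$ (so $\chi^* = \chi$, i.e. $\chi$ is even). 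The mechanism is the classical one: the span of translates of $k$ together with $l$ forms a two-dimensional-type representation, and evenness of $\chi=l^e$ falls out of Lemma \ref{lem}(1). This is the conceptual heart and I would carry it out carefully.

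**Splitting into the two nondegenerate cases.** Once $\chi = l^e$ is a nonzero even multiplicative function, I distinguish two subcases according to whether $\chi^* \neq \chi$ can occur as a genuinely different multiplicative function. In case (2), $k$ is a nonzero multiple of the odd part $(\chi-\chi^*)/2$ of an \emph{arbitrary} multiplicative function $\chi$ with $\chi^*\neq\chi$, and $l = (\chi+\chi^*)/2 + c_2(\chi-\chi^*)/2$; verifying these satisfy \eqref{E2} is a direct substitution using multiplicativity. In case (3), $\chi=\chi^*$ is even, and $k$ does not come from a second multiplicative function but rather from an additive function $A$ twisted by $\chi$. On $S\backslash I_\chi$ I would divide \eqref{E2} through by $\chi$-values (valid exactly off the zero-set $I_\chi$) to reduce the equation to the additive Cauchy equation, yielding $k = \chi A$ with $A$ additive and $A\circ\sigma = -A$ (the odd condition coming again from Lemma \ref{lem}(1)). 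Then $l = \chi(1+cA)$ follows. The genuinely delicate part is the behavior on the ideal $I_\chi$: here $\chi$ vanishes, so division is impossible, and I must track $k$ on $I_\chi\backslash I_\chi^2$ versus on the prime-like set $P_\chi$. Using Lemma \ref{lem} and the defining property of $P_\chi$ (that products $up, pv, upv$ with $u,v\notin I_\chi$ stay in $P_\chi\subseteq I_\chi\backslash I_\chi^2$), I would derive the restriction $\rho=k|_{P_\chi}$, the transformation rules in condition (I), the vanishing condition (II), and $\rho^*=-\rho$.

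**The main obstacle and the converse.** I expect the hard part to be precisely the analysis on $I_\chi$, and specifically establishing conditions (I) and (II): off $I_\chi$ the problem linearizes cleanly, but on the prime ideal one cannot cancel $\chi$, so one must extract the value of $k$ on products $up,\,pv,\,upv$ directly from \eqref{E2} by choosing the free variable off $I_\chi$ and exploiting that $\mu$ is nonvanishing. Showing that such products land back in $P_\chi$ (rather than merely in $I_\chi$) and that $k$ vanishes on the ``intermediate'' set $I_\chi\backslash P_\chi$ requires careful bookkeeping with the semigroup structure and the absence of an identity element. For the converse, I would simply substitute each of the three forms back into \eqref{E2}; cases (1) and (2) are routine, while case (3) requires the extra hypothesis that $k$ be abelian so that the twisted-additive and $P_\chi$ pieces combine consistently under the interchange $x\leftrightarrow y$. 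Finally, the continuity statement follows by the usual argument: $\chi$, $A$, and $\rho$ are built from $k$ (and its translates) by algebraic operations and restrictions that preserve continuity, so $k\in C(S)$ forces $\chi,\chi^*\in C(S)$, $A\in C(S\backslash I_\chi)$, and $\rho\in C(P_\chi)$.
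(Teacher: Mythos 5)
Your overall architecture (degenerate case, then reduce to a sine--addition-type structure, then split according to whether a second multiplicative function appears) is broadly the right shape, but there is a structural error at the heart of your plan. You assert that in the nondegenerate case ``$l^e$ is forced to be a nonzero multiplicative function $\chi$ (so $\chi^*=\chi$)'' and then propose to split cases afterwards. This is false precisely in case (2) of the Proposition: there $l^e=(\chi+\chi^*)/2$ with $\chi\neq\chi^*$, which is \emph{not} multiplicative (an average of two distinct multiplicative functions never is). The correct intermediate statement, which is what the paper proves, is that $l^{\circ}=ck$ and that the pair $(k,l^e)$ satisfies the sine addition law $k(xy)=k(x)l^e(y)+k(y)l^e(x)$; the dichotomy between cases (2) and (3) then comes from the known classification of that law (Ebanks, \cite[Theorem 3.1]{EB2}), in which $l^e$ is either $(\chi_1+\chi_2)/2$ for two distinct multiplicative functions or is itself multiplicative. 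Your plan as written conflates these two outcomes and would break down at the first nontrivial step.

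Two further gaps. First, before one can reach the sine addition law one must show $k=-k^*$; the paper does this by deriving, via linear independence of $k$ and $l$, a relation $\beta k=-k^*$ with $\beta^2=1$, and then \emph{excluding} the alternative $k=k^*$ using Lemma \ref{lem}(2) (which forces $k=0$ on $S^3$ and hence contradicts independence). Your proposal never confronts this alternative. Second, you correctly identify the analysis on $I_{\chi}$ --- the sets $P_{\chi}$, conditions (I) and (II), and the transformation rules for $\rho$ --- as ``the genuinely delicate part,'' but you leave it entirely to ``careful bookkeeping.'' That is exactly the content one cannot wave at on a general semigroup (no identity, no generation by squares); the paper does not rederive it but imports it wholesale from \cite[Theorem 3.1]{EB2}. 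Either cite that result explicitly or supply the argument; as it stands the proposal proves neither the hard inclusion $up,pv,upv\in P_{\chi}$ nor condition (II).
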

\begin{proof}
We check by elementary computations that if $k$ and $l$ are of the forms (1)--(3) then $(k,l)$ is a solution of \eqref{E2}.\par
Conversely, we will discuss two cases according to whether $k$ and  $l$ are linearly dependent or not.\newline
\underline{First case :} $k$ and $l$ are linearly dependent. There exists $c\in \mathbb{C}$ such that $l=ck$. From \eqref{E2} we get 
\begin{equation}
\mu (y)k(x\sigma (y))=ck(x)k(y)-ck(y)k(x)=0,\quad\text{for all}\quad x,y \in S.
\label{E7}
\end{equation}
 Since $\mu(y\sigma(y))=1$ for all $y\in S$, we deduce from \eqref{E7} that $k=0$ on $S^2$. This occurs in (1) of Proposition \ref{P1}.\newline
\underline{Second case :} $k$ and $l$ are linearly independent. Since $\mu (y\sigma(y))=1$ for all $y\in S$, we multiply \eqref{E2} by $\mu(\sigma(y))$ and we get 
\begin{equation}
k(x\sigma(y))=\mu(\sigma(y))k(x)l(y)-\mu(\sigma(y))k(y)l(x),\quad \text{for all}\quad x,y\in S.
\label{D1}
\end{equation}
By using the associativity of the semigroup operation, we can compute $k(x\sigma(y)\sigma(z))$ first as $k((x\sigma(y))\sigma(z))$ and then as $k(x(\sigma(y)\sigma(z)))$ under the identity \eqref{D1} and compare the results to obtain 
\begin{multline}
-k(z)l(x\sigma(y))=k(x)\left[\mu(\sigma(y))l(yz)-\mu(\sigma(y))l(y)l(z) \right] +\\l(x)\left[\mu(\sigma(y))k(y)l(z)-\mu(\sigma(y))k(yz) \right]. 
\label{D2}
\end{multline}
Since $k\neq 0$, there exists $z_0\in S$ such that $k(z_0)\neq 0$, and then we deduce from the identity above that 
\begin{equation}
l(x\sigma(y))=k(x)f(y)+l(x)g(y),
\label{D3}
\end{equation}
where \\
$$f(y)=\dfrac{-1}{k(z_0)}\left[\mu(\sigma(y))l(yz_0)-\mu(\sigma(y))l(y)l(z_0) \right], $$
and 
\begin{equation}
g(y)=\dfrac{1}{k(z_0)}\left[\mu(\sigma(y))k(yz_0)-\mu(\sigma(y))k(y)l(z_0) \right]. 
\label{D4}
\end{equation}
By substituting \eqref{D3} into \eqref{D2}, we get
\begin{align*}
k(x)(-k(z)f(y))+l(x)(-k(z)g(y))=k(x)\left[\mu(\sigma(y))l(yz)- \mu(\sigma(y))l(y)l(z)\right]+\\l(x)\left[\mu(\sigma(y))k(y)l(z)-\mu(\sigma(y))k(yz) \right].  
\end{align*}
Using the linear independence of $k$ and $l$, we obtain 
$$\mu(\sigma(y))l(yz)=\mu(\sigma(y))l(y)l(z)-k(z)f(y),$$
and 
\begin{equation}
\mu(\sigma(y))k(yz)=\mu(\sigma(y))k(y)l(z)+k(z)g(y).
\label{D5}
\end{equation}
From \eqref{E2} and \eqref{D4} we deduce that the function $g(y)$ can be written as 
$$g(y)=\alpha \mu(\sigma(y))k(y)+\beta \mu(\sigma(y))l(y),$$
where 
$$\alpha =\dfrac{\mu(\sigma(z_0))l(\sigma(z_0))-l(z_0)}{k(z_0)}\quad \text{and}\quad \beta =\dfrac{-\mu(\sigma(z_0))k(\sigma(z_0))}{k(z_0)}.$$
From the last form of $g(y)$ and taking into account that $\mu (y\sigma(y))=1$ for all $y\in S$, equation \eqref{D5} can be written as follows
\begin{equation}
k(yz)=k(y)\left[l(z)+\alpha k(z) \right]+\beta k(z)l(y).
\label{Z1}
\end{equation} 
On the other hand from equation \eqref{D1} we get 
\begin{equation}
k(yz)=k(y)l^*(z)-k^*(z)l(y),
\label{D7}
\end{equation}
then by comparing \eqref{Z1} and \eqref{D7} and using the linear independence of $k$ and $l$ we obtain 
\begin{equation}
l(z)+\alpha k(z)=l^*(z),
\end{equation}
\begin{equation}
\beta k(z)=-k^*(z),\quad \text{for all}\quad z\in S.
\label{D6}
\end{equation}
Since $k\neq 0$, then we get from \eqref{D6} that $\beta \neq 0$ and $\beta^2 =1$. This means that $k=k^*$ or $k=-k^*$. If $k=k^*$ then according to Lemma \ref{lem} (2), $k(xyz)=0$ for all $x,y,z\in S$. This implies that \begin{equation}
k(x)l(yz)=k(yz)l(x),\quad \text{for all}\quad x,y,z\in S.
\label{err1}
\end{equation}  
Since $k$ and $l$ are linearly independent, then $k\neq 0$ on $S^2$, so there exists $y_0,z_0\in S$ such that $k(y_0z_0)\neq 0$. By letting $y=y_0$ and $z=z_0$ in \eqref{err1}, we obtain $l=b k$ for some constant $b\in \mathbb{C}$. This contradicts the fact that $k$ and $l$ are linearly independent. Now if $k=-k^*$ then by using the same computations as in the proof of \cite[Theorem 2.1]{BE}, we get $l^{\circ}=ck$  where $c\in \mathbb{C}$ is a constant and that the pair $(k,l^e)$ satisfies the sine addition law
\[k(xy)=k(x)l^e(y)+k(y)l^e(x),\quad x,y\in S.\]
Hence according to \cite[Theorem 3.1]{EB2} and taking into account that $k\neq 0$, the pair falls into two categories:\\
(i) $k=c_1\dfrac{\chi _1-\chi _2}{2}$ and $l^e=\dfrac{\chi _1+\chi _2}{2}$ , where $\chi _1, \chi _2 : S\rightarrow \mathbb{C}$ are different multiplicative functions and $c_1 \in \mathbb{C}\backslash \lbrace0\rbrace$ is a constant. Since $l=l^e+l^{\circ}$ and $k=-k^*$ we deduce that 
\[k=c_1\dfrac{\chi -\chi ^*}{2}\quad\text{and}\quad l=\dfrac{\chi +\chi ^*}{2}+c_2\dfrac{\chi -\chi ^*}{2},\]
where $\chi  : S\rightarrow \mathbb{C}$ is a multiplicative function such that $\chi ^*\neq \chi$ and $c_1 \in \mathbb{C}\backslash  \lbrace0\rbrace,c_2\in \mathbb{C}$ are constants. This occurs in part (2) of Proposition \ref{P1}.\\
(ii)$$k=\left\{ \begin{matrix}
   \chi A & on & S\backslash {{I}_{\chi }}  \\
   0 & on & {{I}_{\chi }}\backslash {{P}_{\chi }}  \\
   \rho  & on & {{P}_{\chi }}  \\
\end{matrix} \right.\quad\text{and}\quad l^e=\chi \ ,
$$ 
where $\chi: S \rightarrow \mathbb{C}$ is a non-zero multiplicative function and $A: S \backslash I_{\chi} \rightarrow \mathbb{C}$ is a non-zero additive function such that $\chi^{*}=\chi$, $A \circ \sigma=-A$, and $\rho: P_{\chi} \rightarrow \mathbb{C}$ is the restriction of $k$ to $P_\chi$ such that $\rho^*=-\rho$ and we have the conditions (I) and (II). Since $l=l^e+l^{\circ}$ and $l^{\circ}=ck$, we can see that
$$ l=\left\{ \begin{matrix}
   \chi (1+cA) & on & S\backslash {{I}_{\chi }}  \\
   0 & on & {{I}_{\chi }}\backslash {{P}_{\chi }}  \\
   c\rho  & on & {{P}_{\chi }}  \\
\end{matrix} \ .\right.
$$ 
This is part (3).\par 
The topological statements are easy to verify. This completes the proof of Proposition \ref{P1}.
\end{proof}
 Now we solve the functional equation \eqref{E1} on a t-compatible semigroup. A notion which was recently introduced by Ebanks \cite[Definition 4.2]{EBA}. 
 \begin{defn}
Let $S$ be a semigroup. We will say that $S$ is compatible, if $S^2=S$ and for every prime ideal $I\subset S$ the following condition holds.
\begin{equation}
\text{For each}\quad q\in I\quad\text{there exists}\quad w_q\in S\backslash I\quad\text{such that}\quad qw_q\in I^2.
\label{Z2}
\end{equation}
We say that a topological semigroup $S$ is t-compatible, if $S=S^2$ and condition \eqref{Z2} holds for every prime ideal $I$.
\end{defn}
The following result shows that the solutions of \eqref{E1} on a t-compatible semigroup  have the same forms found in \cite[Theorem 4.3]{Ajb} for  semigroups generated by their squares.
\begin{thm}
Let $S$ be a t-compatible semigroup, and suppose that  $f,g,h \in C(S)$ satisfy \eqref{E1}. Then $f,g,h$ belong to one of the four families below. In addition $\chi,\chi^*, \theta\in C(S)$, and $A\in C(S\backslash I_{\chi})$.
\begin{enumerate}
\item[(1)] $f=\theta$, $g=0$ and $h$ is arbitrary, where $\theta \in \mathcal{N}_{\mu}(\sigma, S)$.
\item[(2)] $f=\theta$, $g$ is arbitrary and $h=0$, where $\theta \in \mathcal{N}_{\mu}(\sigma, S)$.
\item[(3)] $f=\theta+\alpha \dfrac{\chi +\chi ^*}{2}+\beta \dfrac{\chi -\chi ^*}{2}$ and \newline $g\otimes h=2\left(\beta \dfrac{\chi +\chi ^*}{2}+\alpha \dfrac{\chi -\chi ^*}{2} \right)\otimes \dfrac{\chi -\chi ^*}{2} $, where $\alpha ,\beta \in \mathbb{C}$ are constants, $\theta \in\mathcal{N}_{\mu}(\sigma, S)$ and $\chi : S\rightarrow \mathbb{C}$ is a multiplicative function such that $\left( \alpha,\beta\right) \neq (0,0)$ and $\chi \neq \chi ^*$.
\item[(4)] $$
\left\{\begin{array}{l}
f=\theta +\dfrac{\alpha}{2}\chi A +\dfrac{\beta}{4}\chi A^2, \quad g=\alpha \chi+\beta \chi  A ,\quad h=\chi A,\quad \text { on } \quad S \backslash I_{\chi} \\
f=\theta, \quad g=0,\quad h=0 \quad \text { on } \quad I_\chi,
\end{array}\right.
$$
\end{enumerate} 
where $\alpha ,\beta \in \mathbb{C}$ are constants, $\theta \in\mathcal{N}_{\mu}(\sigma, S)$ and $\chi : S\rightarrow \mathbb{C}$ is a non-zero multiplicative function such that $\left( \alpha,\beta\right) \neq (0,0)$, $\chi = \chi ^*$ and $A:S\backslash I_{\chi} \rightarrow \mathbb{C}$ is a non-zero additive function such that $A\circ \sigma =-A$.

\label{T1}
\end{thm}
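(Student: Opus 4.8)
The plan is to reduce \eqref{E1} to the already solved equation \eqref{E2} and then read off $f,g,h$ from Proposition~\ref{P1}. First I would dispose of the degenerate cases. If $h=0$, then \eqref{E1} reads $f(xy)=\mu(y)f(\sigma(y)x)$, i.e. $f\in\mathcal{N}_{\mu}(\sigma,S)$ with $g$ arbitrary, which is family (2); symmetrically $g=0$ gives family (1). So from here on I assume $g\neq0$ and $h\neq0$. The heart of the argument is an associativity identity: writing \eqref{E1} as $f(ab)=\mu(b)f(\sigma(b)a)+g(a)h(b)$ and expanding $f(xyz)$ both as $f(x(yz))$ and as $f((xy)z)$ (using $\sigma(yz)=\sigma(y)\sigma(z)$, $\mu(yz)=\mu(y)\mu(z)$, and one further application of \eqref{E1} to $f(\sigma(z)xy)$), the two $f$-terms cancel and I obtain the key relation
\[
g(x)h(yz)=g(xy)h(z)+\mu(z)\,g(\sigma(z)x)\,h(y),\qquad x,y,z\in S. \quad (\star)
\]

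Next I would extract from $(\star)$ a companion function turning $h$ into a solution of \eqref{E2}. Fixing $x_0$ with $g(x_0)\neq0$ and dividing $(\star)$ by $g(x_0)$ yields a sine-type law $h(yz)=p(y)h(z)+q(z)h(y)$ with $p(y)=g(x_0y)/g(x_0)$ and $q(z)=\mu(z)g(\sigma(z)x_0)/g(x_0)$. Putting $z=\sigma(y)$ and using $\mu(y\sigma(y))=1$ then rewrites $\mu(y)h(x\sigma(y))$ in the form $h(x)l(y)-h(y)l(x)$ for a suitable $l$ built from $g$, so that $(h,l)$ solves \eqref{E2}; as in the proof of Proposition~\ref{P1} this forces $h^*=-h$. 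Thus Proposition~\ref{P1} applies to $(h,l)$: since $h\neq0$, the pair is of type (2) or (3) there, i.e. $h$ is a nonzero scalar multiple of $\frac{\chi-\chi^*}{2}$ with $\chi\neq\chi^*$, or $h$ equals $\chi A$ on $S\setminus I_{\chi}$ (with $\chi=\chi^*$, $A\circ\sigma=-A$) together with the exotic values on $P_{\chi}$.

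Here the hypothesis that $S$ is t-compatible does the decisive work. Because $S=S^2$, Proposition~\ref{P1}(1) only yields $h=0$, so that family is genuinely excluded. Moreover, applying condition \eqref{Z2} to the prime ideal $I=I_{\chi}$ shows $P_{\chi}=\emptyset$: any $p\in P_{\chi}$ would satisfy $pw_p\in I_{\chi}^2$ for some $w_p\in S\setminus I_{\chi}$, contradicting the defining requirement $pw_p\in I_{\chi}\setminus I_{\chi}^2$ of $P_{\chi}$. Hence the exotic part vanishes and $h$ is zero on all of $I_{\chi}$, giving exactly the $h$ appearing in families (3) (the $\frac{\chi-\chi^*}{2}$ case) and (4) (the $\chi A$ case). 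With $h$ and its multiplicative data in hand, I would feed these back into $(\star)$ to solve for $g$ on $S\setminus I_{\chi}$ — producing $\beta\frac{\chi+\chi^*}{2}+\alpha\frac{\chi-\chi^*}{2}$ (up to the scalar absorbed into the tensor $g\otimes h$) in case (3) and $\alpha\chi+\beta\chi A$ in case (4), with $g=0$ on $I_{\chi}$ — and finally determine $f$ from \eqref{E1}: a direct computation exhibits one particular solution $f_0$ (namely $\alpha\frac{\chi+\chi^*}{2}+\beta\frac{\chi-\chi^*}{2}$, resp. $\frac{\alpha}{2}\chi A+\frac{\beta}{4}\chi A^2$ on $S\setminus I_{\chi}$), whereupon $f-f_0\in\mathcal{N}_{\mu}(\sigma,S)$ supplies the additive term $\theta$.

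I expect the main obstacle to be the analysis on the prime ideal $I_{\chi}$: extracting $l$ cleanly from $g$ so that $(h,l)$ genuinely solves \eqref{E2}, and then transferring the vanishing of $h$ to that of $g$ and $f$ on $I_{\chi}$ while pinning down the precise polynomial-in-$A$ shapes of $g$ and $f_0$ on $S\setminus I_{\chi}$; this is where $S=S^2$ and $P_{\chi}=\emptyset$ are used repeatedly. The converse, that the four families indeed satisfy \eqref{E1}, is a routine verification using the multiplicativity of $\chi$, the additivity of $A$, and $A\circ\sigma=-A$, and the continuity assertions follow from those in Proposition~\ref{P1} together with $g,h\in C(S)$.
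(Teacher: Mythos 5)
Your overall strategy is the one the paper follows: dispose of $g=0$ and $h=0$, exploit the associativity of $f(xyz)$ to get a companion identity, reduce to the $\mu$-sine subtraction law \eqref{E2} and invoke Proposition~\ref{P1}, then use t-compatibility to kill the exotic part on $P_{\chi}$ and back-substitute for $g$ and $f$. Your identity $(\star)$ is correct, and your observation that condition \eqref{Z2} applied to $I=I_{\chi}$ forces $P_{\chi}=\emptyset$ (since $p\in P_{\chi}$ requires $pv\in I_{\chi}\setminus I_{\chi}^2$ for \emph{every} $v\in S\setminus I_{\chi}$, while t-compatibility produces a $w_p$ with $pw_p\in I_{\chi}^2$) is a clean and valid version of the paper's pointwise argument that $h=0$ on $I_{\chi}$.

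The gap is in the reduction to \eqref{E2}. Setting the variables to $(x,\sigma(y))$ in $h(yz)=p(y)h(z)+q(z)h(y)$ and multiplying by $\mu(y)$ gives $\mu(y)h(x\sigma(y))=p(x)h^{*}(y)+\tilde q(y)h(x)$ with $p(x)=g(x_0x)/g(x_0)$ and $\tilde q(y)=g(yx_0)/g(x_0)$; to land in the form $h(x)l(y)-h(y)l(x)$ you need both $h^{*}=-h$ and $p=\tilde q$, and neither is available at that point --- you propose to deduce $h^{*}=-h$ \emph{from} the fact that $(h,l)$ solves \eqref{E2}, which is circular. The paper first proves $h(xy)=-h^{*}(yx)$ (hence $h$ odd, using $S=S^2$) from a different consequence of associativity, namely the representation $h(xy)=g^{*}(x)l(y)-g(y)l^{*}(x)$ of \cite[Lemma 4.2 (1)]{Ajb}, and only then splits according to $g^{\circ}=bh$: for $b\neq 0$ one obtains a pair $(h,m)$ solving \eqref{E2} and applies Proposition~\ref{P1}, but for $b=0$ (where $g$ is even) the natural companion equation is the sine \emph{addition} law and one invokes \cite[Theorem 3.1]{EB2} instead. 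Your single-route reduction silently assumes the first situation; the $b=0$ branch (which yields family (3) with $\alpha=0$ and family (4) with $\beta=0$) needs either a separate treatment or a justification that a single symmetric companion $l$ exists. Once that step is repaired, the remainder of your plan --- recovering $g$ on $S\setminus I_{\chi}$ from $(\star)$, exhibiting a particular solution $f_0$, and absorbing $f-f_0$ into $\theta\in\mathcal{N}_{\mu}(\sigma,S)$ --- matches the paper's use of the computations from \cite[Theorem 4.3]{Ajb}.
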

\begin{proof}
We check by elementary computations that if $f$, $g$ and $h$ are of the forms (1)--(4) then $(f,g,h)$ is a solution of \eqref{E1}.\newline
Let $f,g,h :S\rightarrow \mathbb{C}$ satisfy the functional equation \eqref{E1}. If $g=0$ then $h$ is arbitrary and $f=\theta$ where $\theta \in \mathcal{N}_{\mu}(\sigma, S)$. If $h=0$ then $g$ will be arbitrary and $f=\theta$ where $\theta \in \mathcal{N}_{\mu}(\sigma, S)$. So from now on we assume that $g\neq 0$ and $h\neq 0$. According to  \cite[Lemma 4.2 (1)]{Ajb}, there exists a function $l:S\rightarrow \mathbb{C}$ such that 
$$h(xy)=g^*(x)l(y)-g(y)l^*(x),\quad \text{for all}\quad x,y\in S.$$
This implies that $h(xy)=-h^*(yx)$, so $$h(xyz)=-h^*(zxy)=h(yzx)=-h^*(xyz),$$ for all $x,y,z \in S$. Since $S^2=S$ we deduce that $h=-h^*$ and then accordding to \cite[Lemma 4.2 (6)]{Ajb}, there exists a constant $b\in \mathbb{C}$ such that $g^{\circ}=bh$, so we discuss the two cases : $b=0$ and $b\neq 0$.\newline
\underline{First case :} $b\neq 0$. According to the proof of \cite[Theorem 4.3 case A]{Ajb}  there exists a function $m:S\rightarrow \mathbb{C}$ and a constant $c_3\in \mathbb{C}$ such that the pair $(h,m)$ satisfies the $\mu$-sine subtraction law \eqref{E2}, and $g^e=c_3m^e$, then according to Proposition \ref{P1} and taking into account that $h$ is a non-zero odd function, the pair $(h,m)$ falls into the categories:\\
(i) $h=c_1\dfrac{\chi -\chi ^*}{2}$ and $m=\dfrac{\chi +\chi ^*}{2}+\alpha \dfrac{\chi -\chi ^*}{2}$ where $c_1 \in \mathbb{C}\backslash \lbrace 0\rbrace, \alpha \in \mathbb{C}$ are constants and $\chi : S\rightarrow \mathbb{C}$ is a multiplicative function such that $\chi \neq \chi ^*$, then by using similar computations to the ones of the proof of \cite[Theorem 4.3 case A (i)]{Ajb}, we get that 
\[g=c_3\dfrac{\chi +\chi ^*}{2}+c_2\dfrac{\chi -\chi ^*}{2}\quad \text{and}\quad f=\theta +\dfrac{c_1}{2}\left(c_2\dfrac{\chi +\chi ^*}{2}+c_3\dfrac{\chi -\chi ^*}{2} \right), \]
where $c_2=bc_1 \in \mathbb{C}$ is a constant and $\theta \in \mathcal{N}_{\mu}(\sigma, S)$. This occurs in part (3) with $\alpha=\dfrac{c_1c_2}{2}$ and $\beta =\dfrac{c_1c_3}{2}$.\\
(ii) $$h=\left\{ \begin{matrix}
   \chi A & on & S\backslash {{I}_{\chi }}  \\
   0 & on & {{I}_{\chi }}\backslash {{P}_{\chi }}  \\
   \rho  & on & {{P}_{\chi }}  \\
\end{matrix} \right.\quad\text{and}\quad m=\left\{ \begin{matrix}
   \chi (1+cA) & on & S\backslash {{I}_{\chi }}  \\
   0 & on & {{I}_{\chi }}\backslash {{P}_{\chi }}  \\
   c\rho  & on & {{P}_{\chi }}  \\
\end{matrix} \ ,\right.
$$ 
where $c \in \mathbb{C}$ is a constant, $\chi: S \rightarrow \mathbb{C}$ is a non-zero multiplicative function and $A: S \backslash I_{\chi} \rightarrow \mathbb{C}$ is a non-zero additive function such that $\chi^{*}=\chi$, $A \circ \sigma=-A$,  and $\rho: P_{\chi} \rightarrow \mathbb{C}$ is a function such that $\rho^*=-\rho$ and $\rho$ satisfy the condition (I) from Proposition \ref{P1}.\newline
Let $y\in I_{\chi}$. Since $I_{\chi}$ is a prime ideal (if nonempty) and $S$ is t-compatible, then either $y\in I_{\chi}^2$ or there exists $w_y \in S\backslash I_{\chi}$ such that $yw_y \in I_{\chi}^2$. It follows  from condition (I) that $h(y)=0$. Proceeding exactly as in the proof of \cite[Theorem 4.3 case A (ii)]{Ajb}, we find that
\[g=c_3\chi +c_2 \chi A\quad\text{on}\quad S \backslash I_{\chi}\quad\text{and}\quad g=0\quad\text{on}\quad I_\chi,\]
and 
\[f=\theta +\dfrac{c_3}{2}\chi A+\dfrac{c_2}{4} \chi A^2\quad\text{on}\quad S \backslash I_{\chi}\quad\text{and}\quad f=\theta\quad\text{on}\quad I_\chi,\]
where $c, c_{2},c_3 \in \mathbb{C}$ are constants, $\chi: S \rightarrow \mathbb{C}$ is a non-zero multiplicative function and $A: S \backslash I_{\chi} \rightarrow \mathbb{C}$ is a non-zero additive function such that $\chi^{*}=\chi$, $A \circ \sigma=-A$ and $\theta \in \mathcal{N}_{\mu}(\sigma, S)$. This is part (4) of Theorem \ref{T1} with $\alpha=c_3$ and $\beta =c_2$.\\
\underline{Second case :} $b=0$. According to the proof of \cite[Theorem 4.3 case B]{Ajb} there exists  a constant $\lambda \in \mathbb{C}\backslash \lbrace 0\rbrace$ such that the pair $\left( h,\dfrac{\lambda}{2}g\right) $ satisfies the sine addition law  
\begin{equation}
h(xy)=h(x)\left( \dfrac{\lambda}{2}g(y)\right) +h(y)\left( \dfrac{\lambda}{2}g(x)\right), 
\end{equation}
for all $x,y \in S$. Then according to \cite[Theorem 3.1]{EB2} and taking into account that $h$ is a non-zero odd function  and $g$ is even, we have the following possibilities:\\
(i) $h=c_1\dfrac{\chi -\chi ^*}{2}$ and $g=c_2\dfrac{\chi +\chi ^*}{2}$, where $c_1,c_2 \in \mathbb{C}\backslash \lbrace 0\rbrace$ are constants and $\chi : S\rightarrow \mathbb{C}$ is a multiplicative function such that $\chi \neq \chi ^*$, then we get by  using the same computations as in the proof of  \cite[Theorem 4.3 case B (i)]{Ajb} that 
\[f=\theta +\dfrac{c_1c_2}{2}\dfrac{\chi -\chi ^*}{2},\]
where $\theta \in \mathcal{N}_{\mu}(\sigma, S)$. This is part (3) with $\alpha=0$ and $\beta =\dfrac{c_1c_2}{2}$.\\
(ii) $$h=\left\{ \begin{matrix}
   \chi A & on & S\backslash {{I}_{\chi }}  \\
   0 & on & {{I}_{\chi }}\backslash {{P}_{\chi }}  \\
   \rho  & on & {{P}_{\chi }}  \\
\end{matrix} \right.\quad\text{and}\quad g=c_1\chi \  ,
$$ 
where $c_1 \in \mathbb{C}\backslash \left\lbrace 0 \right\rbrace $ is a constant, $\chi: S \rightarrow \mathbb{C}$ is a non-zero multiplicative function and $A: S \backslash I_{\chi} \rightarrow \mathbb{C}$ is a non-zero additive function such that $\chi^{*}=\chi$, $A \circ \sigma=-A$, and $\rho: P_{\chi} \rightarrow \mathbb{C}$ is a function such that $\rho^*=-\rho$ and the condition (I) is holding. \newline
Proceeding exactly as in the previous case we show that $h=0$ on $I_{\chi}$. Then  by using similar computations to the ones of the proof of \cite[Theorem 4.3 case B (ii)]{Ajb}, we get that
\[f=\theta +\dfrac{c_1}{2}\chi A\quad\text{on}\quad S \backslash I_{\chi}, \]
where $\theta \in \mathcal{N}_{\mu}(\sigma, S)$. This occurs in part (4) with $\beta =0$ and $\alpha=\dfrac{c_1}{2}$. This completes the proof of Theorem \ref{T1}.
\end{proof}
\begin{rem}
Note that if the semigroup $S$ is not t-compatible, d'Alembert's equation \eqref{E1} may have nontrivial solutions taking arbitrary values at some points (See \cite[Example 5.3]{EB3}).
\end{rem}
\subsection*{Acknowledgment}

\end{document}